\documentclass[12pt]{article}   	
\usepackage[T1]{fontenc}
\usepackage[latin9]{inputenc}
\usepackage{geometry}                		
\geometry{letterpaper}                   		
\usepackage{graphicx}				
								
\geometry{tmargin=1in,bmargin=1in,lmargin=1in,rmargin=1in}

\usepackage{amsthm}
\usepackage{amsmath}
\usepackage{authblk,array,setspace} 
\usepackage{amssymb,natbib}
\usepackage[pdftex,colorlinks=true,linkcolor=blue,citecolor=blue,urlcolor=blue,bookmarks=false,pdfpagemode=None]{hyperref}

\theoremstyle{plain}
\newtheorem{thm}{Theorem}
\theoremstyle{plain}

\theoremstyle{plain}

  \theoremstyle{plain}
  \newtheorem{lem}[thm]{Lemma}
  \theoremstyle{plain}
  \theoremstyle{plain}
  
  \theoremstyle{plain}

\newcommand{\tU}{\tilde{U}}
\newcommand{\tV}{\tilde{V}}

\parskip1.5ex
\doublespacing 

\begin{document} 
\title{Sharp Total Variation Bounds for Finitely Exchangeable Arrays\protect\thanks{Alexander M. Volfovsky is an NSF Postdoctoral Fellow in the Department of Statistics at Harvard University (volfovsky@fas.harvard.edu). Edoardo M.~Airoldi is an Associate Professor of Statistics at Harvard (airoldi@fas.harvard.edu).}} 
\author{Alexander Volfovsky}
\author{Edoardo M. Airoldi}
\affil{Department of Statistics, Harvard University}
\date{}

\maketitle

\begin{abstract}
In this article we demonstrate the relationship between finitely exchangeable arrays and finitely exchangeable sequences. We 
then derive sharp bounds on the total variation distance between distributions of finitely and infinitely exchangeable arrays.\newline

\textbf{Keywords:} Finite exchangeability, arrays, total variation bound, networks, hypothesis testing 

\end{abstract}

\newpage
\tableofcontents

\newpage
\section{Introduction}
The study of invariance properties of probability distributions
has long been the focus of statisticians and probabilists \citep{de1756doctrine,de1820theorie,de1931funzione,
hewitt1955symmetric,aldous1985exchangeability,austin2013hierarchical}. 
The most 
common type of invariance is the notion of independent and
identically distributed distributions. It is fundamental for
classical results such as the weak and strong laws of large
numbers and central limit theorems. Relaxation 
of the statistical notion of independence in this definition 
to simply requiring that the order of a sequence of random 
variables not affect their joint distribution gives rise
to the notion of exchangeable random variable sequences. These variables are marginally
identically distributed but they can exhibit different forms of dependence.
Infinite exchangeability of a sequence of random variables
 is in turn related back to independent and
identically distributed random variable sequences via a famous
theorem due to de Finetti \citep{de1931funzione,de1972probability}. 
Considering an infinitely exchangeable sequence
$X_1,X_2,\dots$ of binary random variables the theorem states that there is a unique 
probability measure $\mu$ such that for all $n\geq 1$
\begin{equation}
\Pr(X_1=e_1,\dots,X_n=e_n)=\int p^{\sum e_i}(1-p)^{n-\sum e_i}d\mu(p).
\label{eqn:defin}
\end{equation}
Generalizations of this famous theorem to higher 
dimensional arrays were given independently
by \citet{hoover1989tail} and \citet{aldous1981representations}: For example 
consider an infinitely row-column weakly exchangeable binary matrix $X$.
 The distribution of such a matrix is invariant under the permutation of the row
 and column indices of the matrix by the same permutation. Intuitively, such a 
 matrix can represent relational data, where the shared index set
 of the rows and columns represents an actor, while the entries of the
 matrix represent the existence of a relationship among the actors.  
The Aldous-Hoover representation for this matrix is 
$$X_{ij}={\bf{1}}[W(U_i,U_j)\geq\lambda_{ij}],$$ where
$U_i$ and $\lambda_{ij}$ are 
independent uniform$(0,1)$ random variables and $W$
is a measurable symmetric function from $[0,1]^2\rightarrow[0,1]$. 
Intuitively, the $U_i$ can be interpreted as actor attributes
while the $\lambda_{ij}$ as pairwise attributes.

These results have served as foundation for many Bayesian methods.
For example, proofs of the consistency of Bayesian procedures rely
on the assumption that the data is infinitely exchangeable, that is
conditionally i.i.d \citep{berk1970consistency}. Similarly, the analysis of relational data heavily
relies on the Aldous-Hoover representation for arrays \citep{hoff2002latent}.
The power of these results is derived from the assumption
that the index set of the array is infinite. Specifically, it is well known that de Finetti's theorem does not
hold for finitely exchangeable sequences \citep{diaconis1977finite,diaconis1980finite}.
A similar result has been known, though apparently
not written down, for the failure of the Aldous-Hoover
representation to hold for finitely row-column exchangeable
arrays. Consider the following distribution
on $2\times 2$ binary matrices: $P(x_{11}=x_{22}=0)=1$,
$P(x_{12}=x_{21})=0$ and $P(x_{12}=1)=P(x_{21}=1)=1/2$. 
This is clearly a jointly row-column exchangeable distribution
but an Aldous-Hoover representation would require:
\begin{align*}
0&=P(x_{12}=0,x_{21}=0)=\int (1-p)^2d\mu(p)\\
0&=P(x_{12}=1,x_{21}=1)=\int p^2 d\mu(p)
\end{align*}
for $\mu$ a mixing measure as in the de Finetti representation in Eq~\eqref{eqn:defin}. 


Notions of finite invariance described above have
received a lot of attention in the case of sequences, but much less so
for networks and general arrays. 
Specifically, our work is motivated by the pervasiveness of the 
assumption of joint row-column exchangeability
throughout the field of network analysis \citep{holland1983stochastic,hoff2002latent,diaconis2007graph,airoldi2009mixed,bickel2009nonparametric}. 
While certain network data have the potential 
representation as a sample from an infinite population,
it is very common to observe the whole finite network such as 
in the study of trade between countries \citep{volfovsky2013testing}, 
friendship networks among students in the same class \citep{hoff2013likelihoods}
and interactions among monks in a monastery \citep{sampson1969crisis}. In these
cases an assumption of exchangeability of the nodes is still desirable as there
is no information in the individual labels, but the assumption of infinite exchangeability
is inappropriate since there is no infinite network the
data could have been sampled from.  

This article is motivated by these finitely exchangeable
network scenarios and is not intended as a review of 
infinite exchangeability. A comprehensive overview
of the assumptions and representations 
of infinite exchangeability for networks and general 
arrays 
is available in \citet{orbanz2013bayesian}.
In the next section we derive
sharp bounds on the total variation distance between 
finitely exchangeable network distributions and their
infinitely exchangeable extensions. This is done
by relating the distribution of the 
singular value decomposition of a relational
matrix to the distribution of finitely exchangeable sequences. 
Section~\ref{sec:gen_arrays} extends these results
to general $k$ dimensional arrays where each 
dimension is independently exchangeable. To arrive
at this result we extend
the results of \citet{freedman1977remark}
to bounds on the total variation 
distance between sampling with and without replacement
from multiple urns.
The desire for an exact bound was posed as open
problem 15.10 by \citet[p.~137]{aldous1985exchangeability}.
In the Discussion we 
provide an explicit statement of a representation
theorem for distributions that are invariant
under the operations of a finite group $G$. 
This
representation can be used to compute a $G$-invariant
non-parametric maximum likelihood estimate of a
distribution as well as for the construction of 
test statistics for the infinite exchangeability of a distribution.

\section{Sharp total variation bounds 
for networks}\label{sec:tvbounds}
In this section we derive bounds for the total variation
distance between the distributions of
finitely and infinitely exchangeable networks. 
This quantitative summary provides insight into
the efficacy of the assumption of infinite
exchangeability. By taking limits of the bounds
in the number of nodes in the network we recover
the classical Aldous-Hoover representation.
Recall that a distribution of a network with
$m$ nodes is exchangeable, that is its representation
as a square relational data matrix $X$ 
is jointly row-column exchangeable, if
for all $g\in S_m(=\text{symmetric group})$ 
we have $\Pr(gX=A)=\Pr(X=A)$ where $g$ acts on the rows and columns of $X$ simultaneously. 
The results
of \cite{aldous1981representations} and \cite{hoover1979relations}
provide a representation theorem for 
infinitely exchangeable networks in the spirit 
of de Finetti's theorem for infinitely exchangeable
sequences: If $(X_{ij})_{1,1}^{\infty,\infty}$
is an infinitely exchangeable network then
it can be written as
$X_{ij}=f(\alpha,u_i,u_j,\lambda_{ij})$
for a measurable function $f$ that is symmetric
in its middle arguments and
$\alpha,u_i,\lambda_{ij}$
all independent ${\rm uniform}(0,1)$. This is
explicitly called joint or weak exchangeability.
This representation is frequently
used when describing relational data where the object of
interest is a square array $X$. 

It is natural to ask how close the finitely exchangeable 
distributions are to the Aldous-Hoover representation
of an infinitely row-column exchangeable array. 
The example of the failure of the Aldous-Hoover
representation for jointly exchangeable arrays is
similar in spirit to the classical example
provided by \citet{diaconis1977finite} for the failure
of the de Finetti's theorem for a finite sequence. 
We first present the sharp total variation bound for
finitely exchangeable sequences found in 
\citet{diaconis1980finite} and then employ a singular
value decomposition to extend the result
to networks.

\begin{thm}[\citet{diaconis1980finite} Theorem 13]\label{pd_thm}
Let $(\mathcal{S},\mathcal{B})$ be an abstract measurable
space and let $\mathcal{S}^\star$ be the set of probabilities
on $(\mathcal{S},\mathcal{B})$ endowed with the smallest
$\sigma$-field $\mathcal{B}^\star$ that makes $p\rightarrow p(A)$
measurable for all $A\in\mathcal{B}$. 
Let $P$ be an exchangeable probability on $(\mathcal{S}^n,\mathcal{B}^n)$, $P_k$ be the projection of $P$ onto $(\mathcal{S}^k,\mathcal{B}^k)$. 
There exists a probability $\mu$ on $(\mathcal{S}^\star,\mathcal{B}^\star)$ such that if $P_{\mu k}$ is a projection of an infinitely exchangeable distribution onto $(\mathcal{S}^k,\mathcal{B}^k)$ with mixing measure $\mu$, then
$\|P_k-P_{\mu k}\|\leq 2\beta(k,n)$ for all $k\leq n$ where $\beta(k,n)=1-n^{-k}n!/(n-k)!$.
\end{thm}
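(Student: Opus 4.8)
The plan is to realize the approximating mixing measure $\mu$ concretely as the law of the random empirical measure $m_n := \frac1n\sum_{i=1}^n \delta_{X_i}$, where $(X_1,\dots,X_n)\sim P$, and then to identify $\|P_k-P_{\mu k}\|$ with the classical distance between sampling $k$ times with and without replacement from an urn of size $n$, whose value is governed by the probability of a collision.

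First I would dispatch the measurability bookkeeping. Since $\mathcal{B}^\star$ is generated by the evaluation maps $p\mapsto p(A)$, $A\in\mathcal{B}$, and $\omega\mapsto m_n(\omega)(A)=\frac1n\#\{i:X_i(\omega)\in A\}$ is plainly $\mathcal{B}^n$-measurable, the map $\omega\mapsto m_n(\omega)$ is measurable into $(\mathcal{S}^\star,\mathcal{B}^\star)$; set $\mu:=P\circ m_n^{-1}$. Next, for $p\in\mathcal{S}^\star$ let $p^{\otimes k}$ be the $k$-fold product measure; a monotone-class argument starting from measurable rectangles $A_1\times\cdots\times A_k$, on which $p^{\otimes k}(A_1\times\cdots\times A_k)=\prod_j p(A_j)$ is $\mathcal{B}^\star$-measurable by definition of $\mathcal{B}^\star$, shows $p\mapsto p^{\otimes k}(B)$ is measurable for every $B\in\mathcal{B}^k$. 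Hence $P_{\mu k}(B):=\int_{\mathcal{S}^\star} p^{\otimes k}(B)\,d\mu(p)$ is a well-defined exchangeable probability on $(\mathcal{S}^k,\mathcal{B}^k)$ and is exactly the $k$-dimensional projection of the infinitely exchangeable distribution with de Finetti measure $\mu$.

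The core of the argument is a coupling. Draw $(X_1,\dots,X_n)\sim P$; conditionally on these $n$ values let $I_1,\dots,I_k$ be i.i.d.\ uniform on $\{1,\dots,n\}$ (``with replacement''), so that $(X_{I_1},\dots,X_{I_k})$ is conditionally $k$ i.i.d.\ draws from $m_n$ and therefore has unconditional law $\int p^{\otimes k}\,d\mu(p)=P_{\mu k}$. Let $J_1,\dots,J_k$ be a uniformly random \emph{ordered} $k$-subset of $\{1,\dots,n\}$ (``without replacement''); since $J$ is independent of $X$ and $P$ is exchangeable, $(X_{J_1},\dots,X_{J_k})$ has law $P_k$. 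Couple $I$ and $J$ on one probability space by putting $J=I$ whenever the coordinates of $I$ are pairwise distinct — the conditional law of $I$ given ``all coordinates distinct'' is precisely the uniform law on ordered $k$-subsets, so $J$ retains the right marginal — and drawing $J$ independently otherwise. Then $\{(X_{I_j})_j\neq(X_{J_j})_j\}\subseteq\{I\neq J\}\subseteq\{I\text{ has a repeated coordinate}\}$, an event of probability $1-\tfrac{n(n-1)\cdots(n-k+1)}{n^k}=\beta(k,n)$.

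Finally I would convert the coupling into the stated bound: for every $B\in\mathcal{B}^k$,
\[
|P_k(B)-P_{\mu k}(B)|=\bigl|\Pr((X_{J_j})_j\in B)-\Pr((X_{I_j})_j\in B)\bigr|\le\Pr\bigl((X_{I_j})_j\neq(X_{J_j})_j\bigr)\le\beta(k,n),
\]
and taking the supremum over $B$, together with the convention $\|\nu\|=2\sup_B|\nu(B)|$ for the signed measure $\nu=P_k-P_{\mu k}$, gives $\|P_k-P_{\mu k}\|\le 2\beta(k,n)$. I expect the only real friction to be technical rather than conceptual: making the ``couple conditionally on the population'' step fully rigorous in the abstract measurable setting (e.g.\ working on the explicit product space $\mathcal{S}^n\times\{1,\dots,n\}^k$, or invoking a regular conditional distribution) and confirming the two measurability claims above. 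The combinatorial estimate yielding $\beta(k,n)$ is elementary; the substance is the identification of $P_{\mu k}$ as with-replacement sampling and of $P_k$ as without-replacement sampling from the $n$ coordinates of $P$.
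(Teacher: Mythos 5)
Your proof is correct and follows essentially the same route as the paper: take $\mu$ to be the law of the empirical measure of $P$, identify $P_k$ with sampling without replacement and $P_{\mu k}$ with sampling with replacement from the $n$ coordinates, and bound the resulting urn comparison. The only cosmetic difference is that you finish with a coupling/collision-probability argument, whereas the paper (following Diaconis--Freedman) evaluates the with/without-replacement total variation distance exactly on the set of distinct tuples; both give $\beta(k,n)$ in the $\sup_B$ convention and hence $2\beta(k,n)$ in the stated norm.
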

\begin{proof}[Proof outline]
This result is a direct consequence of finding the 
total variation bounds for sampling with and without
replacement from $\mathcal{S}^k$. Sampling without replacement represents an extreme point of the space of exchangeable distributions while sampling with replacement is a power probability. By letting $\mu$ be the distribution of the empirical measure implied by $P$, the difference $P_k(A)-P_{\mu k}(A)$ (for $A\in\mathcal{S}^k$) can be written as a mixture of differences between sampling without and with replacement. 
The sharpness
of the bound is immediate as it is achieved by the
distance between sampling with and without
replacement from an unconstrained statespace $\mathcal{S}$. 
\end{proof}
 
The projection onto $(\mathcal{S}^k,\mathcal{B}^k)$
describes the notion of extendibility of an exchangeable
distribution. That is, the theorem requires that
an exchangeable distribution on $(\mathcal{S}^k,\mathcal{B}^k)$
be extendible to 
an exchangeable distribution
on $(\mathcal{S}^n,\mathcal{B}^n)$ as such $P_k$ is a
marginal distribution of $P$. A similar construct is
available for networks as well: an exchangeable distribution on 
an $m$ dimensional network is extendible to $r$ dimensions
if it is the marginal of an $r$ dimensional exchangeable
distribution on networks. Infinite extendibility 
is equivalent to infinite exchangeability and so 
finite extendibility is a weaker assumption.

\begin{thm}[Total variation of jointly row-column exchangeable arrays]\label{thm:joint}
Finitely joint row-column ($m$ dimensional) exchangeable 
(and $r$ extendible) probabilities are bounded in total variation
distance from infinitely joint row-column exchangeable probabilities by 
$2\beta(m,r)$. This bound is sharp.
\end{thm}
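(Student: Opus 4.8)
The plan is to reduce the network problem to the sequence problem of Theorem~\ref{pd_thm} via the singular value decomposition, and then to read off sharpness from a diagonal embedding of the extremal sequence example. First I would pass to the $r$-dimensional witness: by $r$-extendibility the given $m$-dimensional jointly row--column exchangeable law is the law of the top-left $m\times m$ block of an $r\times r$ jointly row--column exchangeable array $\tilde X$. Write $\tilde X=\tilde U\,\tilde\Sigma\,\tilde V^{T}$ for a singular value decomposition, with the factors chosen by a permutation-equivariant measurable rule (using an auxiliary independent randomization to break ties among equal or zero singular values), so that conjugating $\tilde X$ by a permutation matrix $P_g$ sends $(\tilde U,\tilde\Sigma,\tilde V)$ to $(P_g\tilde U,\tilde\Sigma,P_g\tilde V)$; in particular $\tilde\Sigma$ is a permutation-invariant functional of $\tilde X$, and the rows $s_i:=(\tilde u_i,\tilde v_i)\in\mathbb R^{r}\times\mathbb R^{r}$ of $(\tilde U,\tilde V)$ transform by $s_i\mapsto s_{g^{-1}(i)}$. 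Since the conditioning event $\{\tilde\Sigma=\Sigma\}$ is permutation-invariant, the conditional law of $\tilde X$ given $\tilde\Sigma=\Sigma$ is still jointly row--column exchangeable, and hence $(s_1,\dots,s_r)$ is a finitely exchangeable sequence on the abstract measurable space $(\mathcal S,\mathcal B)=(\mathbb R^{r}\times\mathbb R^{r},\mathrm{Borel})$ whose $m$-term marginal $(s_1,\dots,s_m)$ is $r$-extendible. Crucially $\tilde X_{ij}=\sum_{k}\Sigma_k\,\tilde u_{ik}\tilde v_{jk}$ for $i,j\le m$, so the target array $X$ is a fixed measurable function $\psi_\Sigma(s_1,\dots,s_m)$ of this sequence.

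Next I would apply Theorem~\ref{pd_thm} with $n=r$ and $k=m$ to the conditional law of $(s_1,\dots,s_m)$ given $\tilde\Sigma=\Sigma$: there is a mixing measure $\mu_\Sigma$ on $\mathcal S^{\star}$ with $\|\mathrm{Law}(s_{1:m}\mid\Sigma)-P_{\mu_\Sigma m}\|\le 2\beta(m,r)$. Pushing both sides forward by $\psi_\Sigma$ and using that total variation does not increase under a measurable map gives $\|\mathrm{Law}(X\mid\tilde\Sigma=\Sigma)-Q_\Sigma\|\le 2\beta(m,r)$, where $Q_\Sigma:=(\psi_\Sigma)_{*}P_{\mu_\Sigma m}$ is the law of $\psi_\Sigma(S_1,\dots,S_m)$ with the $S_i$ conditionally i.i.d.\ given a parameter $\theta\sim\mu_\Sigma$; since $\psi_\Sigma$ is bilinear across distinct indices, $Q_\Sigma$ is the law of an array of the form $g_\Sigma(\theta,S_i,S_j)$ with the $S_i$ i.i.d., i.e.\ an infinitely jointly row--column exchangeable network. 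Finally I would average over $\Sigma$: with $\nu:=\mathrm{Law}(\tilde\Sigma)$, the mixture $Q:=\int Q_\Sigma\,d\nu(\Sigma)$ is again infinitely exchangeable (absorb $(\Sigma,\theta)$ into one global variable), and by convexity of the total variation norm $\|\mathrm{Law}(X)-Q\|\le\int\|\mathrm{Law}(X\mid\tilde\Sigma=\Sigma)-Q_\Sigma\|\,d\nu(\Sigma)\le 2\beta(m,r)$, which is the asserted bound.

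For sharpness I would exhibit one extremal law: take $(c_1,\dots,c_r)$ to be a uniform random injection of $\{1,\dots,r\}$ into itself (equivalently, sampling without replacement from the $r$-point set), which is an exchangeable length-$r$ sequence, and let $P$ be the law of the diagonal matrix $\mathrm{diag}(c_1,\dots,c_m)$. This is jointly row--column exchangeable and $r$-extendible via $\mathrm{diag}(c_1,\dots,c_r)$. If $Q$ is any infinitely jointly row--column exchangeable network on $[m]$, then diagonal extraction $\delta(M)=(M_{11},\dots,M_{mm})$ is a measurable map, so $\|P-Q\|\ge\|\delta_{*}P-\delta_{*}Q\|$; here $\delta_{*}P$ is the law of sampling $m$ items without replacement from the $r$-point set, while $\delta_{*}Q$, being the diagonal of an infinitely exchangeable network, is an infinitely exchangeable (hence by de Finetti conditionally i.i.d.) sequence, so $\delta_{*}Q=P_{\nu m}$ for some $\nu$. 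By the sharpness assertion in Theorem~\ref{pd_thm}, $\inf_{\nu}\|\delta_{*}P-P_{\nu m}\|=2\beta(m,r)$ (attained against i.i.d.\ uniform draws from the $r$-point set), so $\|P-Q\|\ge 2\beta(m,r)$ for every such $Q$; combined with the bound just proved this forces equality.

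I expect the main obstacle to be the first paragraph: producing a genuinely permutation-equivariant measurable selection of the SVD factors and controlling the degenerate configurations (repeated singular values, zero singular values, non-uniqueness of singular subspaces) so that $(s_1,\dots,s_r)$ really is exchangeable while $X$ is still recovered by $\psi_\Sigma$ regardless of which valid factorization is chosen; the auxiliary-randomization device should handle this, but it must be spelled out. The remaining steps --- invoking Theorem~\ref{pd_thm}, contraction of total variation under pushforward, and the mixture-convexity argument over $\tilde\Sigma$ --- are routine.
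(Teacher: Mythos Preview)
Your proposal is correct and follows essentially the same route as the paper: reduce joint row--column exchangeability of $X$ to row exchangeability of the concatenated singular-vector rows $s_i=(\tilde u_i,\tilde v_i)$ via an SVD with an auxiliary randomization to handle non-uniqueness, and then invoke Theorem~\ref{pd_thm} on the resulting length-$r$ exchangeable sequence. Your treatment is in fact more explicit than the paper's in two respects: you condition on $\tilde\Sigma$ and argue pushforward/mixture-convexity to recover the array bound, and you supply a concrete sharpness construction (diagonal embedding of the Diaconis--Freedman extremal urn example and diagonal extraction to reduce back to the sequence lower bound), whereas the paper's proof simply asserts that Theorem~\ref{pd_thm} applies and does not spell out sharpness separately.
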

\begin{proof}
We first demonstrate that the distribution of $X$ is jointly
row-column exchangeable if and only if the distribution of
its left and right eigenvectors is row exchangeable.
Since the singular
value decomposition is not unique, care must be taken.
First, we overload notation to write the singular value decomposition (SVD) of $X$ as 
$X\overset{d}{=}U(X)D(X)V(X)^t$. 
We choose a decomposition such that $D(X)$ is a diagonal matrix with non-negative entries in descending order.
The functions $U(\cdot)$ and $V(\cdot)$ map to a pre-specified choice of left and right singular vectors of $X$ (for example, for non-degenerate singular values one can use the SignFlip function of \cite{bro2008resolving}). For null singular values, the corresponding columns of $U(X)$ and $V(X)$ are distributed uniformly over the bases for the left and right null spaces of $X$.
Define a set $E(X)=\{O \ {\rm orthogonal}:D(X) = OD(X)O^t\}$, the space of orthogonal matrices that leaves the diagonal singular value matrix unchanged.
For example: If all the singular values are 
unique then $E(X)$ contains all diagonal matrices with entries $\pm 1$. If the 
first two singular value are the same then $E(X)$ also contains orthogonal matrices for which the leading $2\times 2$ 
submatrix is an orthogonal matrix.
Now, let the random variable $S(X)$ be uniformly distributed on $E(X)$ and define the random variables 
$$\tU(X)\overset{d}{=}U(X)S(X),\ \tV(X)\overset{d}{=}V(X)S(X).$$
These variables define random rotations of the left and right singular vectors of $X$. 
This mechanism of introducing additional 
randomness for identifying distributions of
eigenvectors was previously used in \citep{tao2011random}.
Since $S(X)$ is uniformly distributed on $E(X)$, it is clear that $\tU(X)$ and $\tV(X)$ are uniformly distributed on the equivalence classes
$${\rm eqc}(U(X))=\{U(X)O:O\in E(X)\},\ {\rm eqc}(V(X))=\{V(X)O:O\in E(X)\}.$$
Let $g$ be an element of the permutation group and define its action on $X$ as the permutation of the rows and columns jointly and its action on the singular vectors as the permutation of the rows only.
Since ${\rm eqc}(U(gX))=g{\rm eqc}(U(X))$ and ${\rm eqc}(V(gX))=g{\rm eqc}(V(X))$ we have that $gX\overset{d}{=}X$ implies $g[\tU(X);\tV(X)]\overset{d}{=}[\tU(X);\tV(X)]$.
We have
thus shown that if $X$ is row-column exchangeable then
$[\tU(X);\tV(X)]$ is row-exchangeable. 


If the right and left eigenvectors are row exchangeable
then $X$ is jointly row-column exchangeable by construction.
To apply Theorem \ref{pd_thm} we require extendibility of a sequence. To understand extendibility we note that if an $r\times r$ matrix
is row-column exchangeable then the an $m\times m$ submatrix is also row-column exchangeable. Defining the $U$ and $V$ above be the $m\times r$ rows of the singular value decomposition of the large $r\times r$ matrix the above results hold. Constructing the relevant $[\tU,\tV]$ matrix and treating the rows as a sequence we see that we can directly apply Theorem \ref{pd_thm} to 
get the desired result. 
\end{proof}

The theorem translates the discussion of the joint row-column
exchangeability of a distribution on square matrices to the discussion 
of the joint row exchangeability of orthogonal matrices.

\section{Sharp total variation bounds for general arrays}\label{sec:gen_arrays}
Without loss of generality, in this section we prove theorems 
for two dimensional arrays and then provide an outline
of how one extends the results to higher dimensions. 
Recall 
that a distribution of a $m\times n$ array $X$ is 
row-column exchangeable if for all
$(g_1,g_2)\in S_m\times S_n$ we have 
$\Pr((g_1,g_2)X=A)=\Pr(X=A)$ where $g_1$ acts on the rows and $g_2$ acts on the columns of $X$. 
The Aldous-Hoover representation
for an infinitely row-column exchangeable array
is: $(X_{ij})_{1,1}^{\infty,\infty}$ is infinitely row-column
exchangeable then $X_{ij}=f(\alpha,u_i,v_j,\lambda_{ij})$
for a measurable function $f$ and $\alpha,u_i,v_j,\lambda_{ij}$ 
all independent ${\rm uniform}(0,1)$ random variables. 
To find the total variation bounds for general exchangeable
arrays we require the following extension of Theorem~\ref{pd_thm}:

\begin{thm}[Total variation bounds for partial exchangeability]\label{thm:partial}
Let  $(\mathcal{S}_1,\mathcal{B}_1)$,  $(\mathcal{S}_2,\mathcal{B}_2)$
be abstract measurable
spaces and let 
$\mathcal{S}_i^\star$
be the set of probabilities
on $(\mathcal{S}_i,\mathcal{B}_i)$ endowed with the smallest
$\sigma$-field
$\mathcal{B}_i^\star$
that makes $p\rightarrow p(A)$
measurable for all $A\in\mathcal{B}_i$. 
Let $P$ be a partially exchangeable probability on
 $(\mathcal{S}_1^{n_1}\times\mathcal{S}_2^{n_2},\mathcal{B}_1^{n_1}\times\mathcal{B}_2^{n_2})$, $P_{k_1,k_2}$ 
 be the projection of $P$ onto
  $(\mathcal{S}_1^{k_1}\times\mathcal{S}_2^{k_2},\mathcal{B}_1^{k_1}\times\mathcal{B}_2^{k_2})$.
Then there exists a probability $\mu$ on 
$(\mathcal{S}^\star_1\times\mathcal{S}^\star_2,\mathcal{B}^\star_1\times\mathcal{B}^\star_2	)$
such that
if $P_{\mu k_1,k_2}$ is a projection of an infinitely
partially exchangeable distribution onto
 $(\mathcal{S}_1^{k_1}\times\mathcal{S}_2^{k_2},\mathcal{B}_1^{k_1}\times\mathcal{B}_2^{k_2})$ with mixing measure $\mu$, then
$$\|P_{k_1,k_2}-P_{\mu k_1,k_2}\|\leq 2\beta((k_1,k_2),(n_1,n_2))\ \forall k_1\leq n_1, k_2\leq n_2$$ where 
$$\beta((k_1,k_2),(n_1,n_2))=1-n_1^{-k_1}n_1!/(n_1-k_1)!n_2^{-k_2}n_2!/(n_2-k_2)!.$$
This bound is tight.
\end{thm}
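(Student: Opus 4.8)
The plan is to mirror the proof of Theorem~\ref{pd_thm}, replacing the single urn appearing there by two independent urns; this realizes the ``multiple urns'' extension of \citet{freedman1977remark} mentioned in the introduction. First I would reduce the statement to one about sampling from two urns. Writing a $P$-distributed sample as $(X_1,\dots,X_{n_1},Y_1,\dots,Y_{n_2})$, set $\rho_1=\tfrac1{n_1}\sum_{i=1}^{n_1}\delta_{X_i}$ and $\rho_2=\tfrac1{n_2}\sum_{j=1}^{n_2}\delta_{Y_j}$, and let $\mu$ be the joint law of $(\rho_1,\rho_2)$ on $(\mathcal S_1^\star\times\mathcal S_2^\star,\mathcal B_1^\star\times\mathcal B_2^\star)$. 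The structural claim I would establish is that, conditionally on $(\rho_1,\rho_2)$, the block $(X_1,\dots,X_{n_1})$ is a uniformly random ordering of the multiset $n_1\rho_1$ and $(Y_1,\dots,Y_{n_2})$ an \emph{independent} uniformly random ordering of $n_2\rho_2$: the conditional law lives on the finite product of the two sets of orderings, is invariant under $S_{n_1}\times S_{n_2}$, and that group acts transitively on the product, so the conditional law is the product of the two uniform laws. Projecting onto the first $k_1,k_2$ coordinates, $P_{k_1,k_2}$ is then the $\mu$-mixture of $\mathrm{wor}_{k_1}(\rho_1)\otimes\mathrm{wor}_{k_2}(\rho_2)$, where $\mathrm{wor}_k(\nu)$ denotes sampling $k$ items without replacement from the size-$n$ urn $\nu$. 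On the other side, ``infinitely partially exchangeable with mixing measure $\mu$'' means (by de~Finetti applied within each block, the two blocks conditionally independent given the directing pair) drawing $(\theta_1,\theta_2)\sim\mu$ and then sampling i.i.d.\ from $\theta_1$ and i.i.d.\ from $\theta_2$; so $P_{\mu k_1,k_2}$ is the $\mu$-mixture of $\theta_1^{\otimes k_1}\otimes\theta_2^{\otimes k_2}$. The measure-theoretic care needed to make these conditional constructions rigorous is the same as in Theorem~\ref{pd_thm}.

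Second, I would bound the distance mixture-wise. By convexity of $\|\cdot\|$,
\[
\|P_{k_1,k_2}-P_{\mu k_1,k_2}\|\ \le\ \int\big\|\mathrm{wor}_{k_1}(\theta_1)\otimes\mathrm{wor}_{k_2}(\theta_2)-\theta_1^{\otimes k_1}\otimes\theta_2^{\otimes k_2}\big\|\,d\mu(\theta_1,\theta_2),
\]
so it is enough to bound, for each fixed pair of urns, the total variation between the two product measures by $2\beta((k_1,k_2),(n_1,n_2))$. Here is the one place where the obvious argument is not sharp: the triangle inequality only yields $\beta(k_1,n_1)+\beta(k_2,n_2)$, which is strictly larger than the claimed $1-(1-\beta(k_1,n_1))(1-\beta(k_2,n_2))$. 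Instead I would use the coupling characterization of total variation: couple $\mathrm{wor}_{k_i}(\theta_i)$ with $\theta_i^{\otimes k_i}$ by a maximal coupling, so the samples in block $i$ disagree with probability equal to their total variation distance, which is at most $\beta(k_i,n_i)$ by Freedman's single-urn bound (the content behind Theorem~\ref{pd_thm}, with equality for an urn of distinct items); running the two couplings independently gives a coupling of the two product measures whose components disagree with probability $1-(1-\beta(k_1,n_1))(1-\beta(k_2,n_2))=\beta((k_1,k_2),(n_1,n_2))$, since $1-\beta(k_i,n_i)=n_i^{-k_i}n_i!/(n_i-k_i)!$. Integrating over $\mu$ then gives $\|P_{k_1,k_2}-P_{\mu k_1,k_2}\|\le2\beta((k_1,k_2),(n_1,n_2))$. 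The $k$-dimensional version is verbatim the same with $k$ independent urns and the product $\prod_\ell(1-\beta(k_\ell,n_\ell))$.

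For tightness I would take $\mathcal S_i$ with at least $n_i$ elements, fix $F_i\subset\mathcal S_i$ with $|F_i|=n_i$, and let $P$ be the product of the uniform distribution over orderings of $F_1$ with the uniform distribution over orderings of $F_2$; this $P$ is partially exchangeable. Then $\rho_1,\rho_2$ are deterministic, $\mu$ is a point mass, $P_{k_1,k_2}$ is uniform on $F_1^{(k_1)}\times F_2^{(k_2)}$ (the tuples with distinct coordinates drawn from $F_1$, resp.\ $F_2$), and the upper bound above is attained. For the matching lower bound over \emph{all} infinitely partially exchangeable $Q$ with directing measure $\nu$, I would use $Q_{k_1,k_2}(F_1^{(k_1)}\times F_2^{(k_2)})=\int\theta_1^{\otimes k_1}(F_1^{(k_1)})\,\theta_2^{\otimes k_2}(F_2^{(k_2)})\,d\nu(\theta_1,\theta_2)$ together with the fact that $\theta_i^{\otimes k_i}(F_i^{(k_i)})$, viewed as a function of the sub-probability vector $(\theta_i(\{x\}))_{x\in F_i}$, is maximized at the uniform probability vector on $F_i$, with value $n_i!/((n_i-k_i)!\,n_i^{k_i})=1-\beta(k_i,n_i)$; hence that event has $Q_{k_1,k_2}$-mass at most $(1-\beta(k_1,n_1))(1-\beta(k_2,n_2))$ but $P_{k_1,k_2}$-mass $1$, so $\|P_{k_1,k_2}-Q_{k_1,k_2}\|\ge2\beta((k_1,k_2),(n_1,n_2))$.

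The step I expect to be the main obstacle is the reduction, specifically getting the two structural facts exactly right: that $P_{k_1,k_2}$ and $P_{\mu k_1,k_2}$ are mixtures of products over the \emph{same} $\mu$, which depends on the conditional independence of the two blocks given their empirical measures and on the two-block de~Finetti representation; and the realization that one must couple the two urns independently rather than appeal to the triangle inequality, which is exactly what turns the loose additive bound into the sharp multiplicative form of $\beta$.
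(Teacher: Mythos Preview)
Your proposal is correct and follows essentially the same approach as the paper: let $\mu$ be the law of the pair of empirical measures, reduce both $P_{k_1,k_2}$ and $P_{\mu k_1,k_2}$ to $\mu$-mixtures of sampling without and with replacement from two urns, bound the mixture-wise total variation, and prove tightness via the Schur-concavity argument of \citet{diaconis1980finite} applied to each urn on the event $B=F_1^{(k_1)}\times F_2^{(k_2)}$. The only cosmetic difference is that the paper obtains the two-urn with/without-replacement distance by a direct computation in the appendix (evaluating $Q(B)-M(B)$ as in \citet{freedman1977remark}, which immediately gives the product $\prod_i n_i!/\big((n_i-k_i)!\,n_i^{k_i}\big)$), whereas you arrive at the same product form $1-\prod_i(1-\beta(k_i,n_i))$ by running independent maximal couplings of the single-urn comparisons; both routes yield the identical bound and are equally valid.
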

\begin{proof}
The proof is the same as the proof of Theorem \ref{pd_thm}
after making the obvious substitutions. 
Specifically, in Theorem \ref{pd_thm} we employed results on sampling from an urn that included all possible sequences of length $n$ composed of elements of a set $\mathcal{S}$. Here we consider sampling from a pair of urns that together include all possible pairs of sequences of length $n_1+n_2$ where the first sequence is composed of elements of $\mathcal{S}_1$ while the second of elements of $\mathcal{S}_2$. 
The distributions we compare are sampling with and without replacement 
from this collection of urns. Any finitely partially exchangeable distributions	
will be a mixture of the distributions of sampling without replacement from these urns
while the projection of infinitely partially exchangeable distributions
will be a mixture of the distributions of sampling with replacement from
these urns. The bound and the tightness of the bound are given by
the technical lemmas in the Appendix.
\end{proof}

We now extend Theorem~\ref{thm:joint} to separately
exchangeable arrays. 
\begin{thm}[Total variation bounds]\label{thm:tv_bounds}
Let $P_{m,n}^{r,q}$ be a distribution on $m\times n$ dimensional 
matrices that is finitely 
row-column exchangeable and is extendible to a row-column
exchangeable distribution on $r\times q$ dimensional matrices.
Let $P_{\mu,m,n}$ be the projection of
an infinitely row-column exchangeable distribution onto the space
of $m\times n$ dimensional matrices. Then
for $\|\cdot\|$ representing the total variation norm we have
\begin{align*}
\|P_{m,n}^{r,q}-P_{\mu,m,n}\|\leq \beta((m,n),(r,q)).
\end{align*}
The bound is tight.
\end{thm}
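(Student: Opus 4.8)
The plan is to carry the SVD reduction of Theorem~\ref{thm:joint} over to the separately exchangeable setting and then invoke Theorem~\ref{thm:partial} in place of Theorem~\ref{pd_thm}. Let $Y$ be the $r\times q$ row-column exchangeable extension of $X$, set $t=\min(r,q)$, and write its singular value decomposition as $Y\overset{d}{=}U(Y)D(Y)V(Y)^t$, with $D(Y)$ the $t\times t$ diagonal matrix of singular values in descending order and $U(Y)$ an $r\times t$, $V(Y)$ a $q\times t$ matrix of singular vectors chosen as in the proof of Theorem~\ref{thm:joint}; let $S(Y)$ be uniform on $E(Y)$ and $\tU(Y)=U(Y)S(Y)$, $\tV(Y)=V(Y)S(Y)$. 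A row permutation $g_1\in S_r$ of $Y$ permutes the rows of $U(Y)$ only, a column permutation $g_2\in S_q$ permutes the rows of $V(Y)$ only, and $D(Y)$ is invariant under both; since ${\rm eqc}(U((g_1,g_2)Y))=g_1\,{\rm eqc}(U(Y))$ and ${\rm eqc}(V((g_1,g_2)Y))=g_2\,{\rm eqc}(V(Y))$, the argument of Theorem~\ref{thm:joint} shows that $Y$ is row-column exchangeable if and only if, conditionally on $D(Y)$, the pair consisting of the sequence of rows of $\tU(Y)$ and the sequence of rows of $\tV(Y)$ is partially exchangeable (each block row-exchangeable within itself). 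The reverse implication is immediate from $Y=\tU(Y)D(Y)\tV(Y)^t$ together with $S(Y)D(Y)S(Y)^t=D(Y)$.

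Because $X$ is the $m\times n$ upper-left submatrix of $Y$, its entries satisfy $X=\big(\text{first }m\text{ rows of }\tU(Y)\big)\,D(Y)\,\big(\text{first }n\text{ rows of }\tV(Y)\big)^t$, so given $D(Y)=d$ there is a measurable map $\phi_d$ with $X=\phi_d\big(\tU(Y)_{1:m},\tV(Y)_{1:n}\big)$. Hence, conditionally on $D(Y)=d$, the law $Q^d_{m,n}$ of $\big(\tU(Y)_{1:m},\tV(Y)_{1:n}\big)$ is the $(m,n)$-projection of a partially exchangeable pair of sequences of lengths $(r,q)$ --- exactly the object covered by Theorem~\ref{thm:partial}. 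Let $\mu_d$ be the mixing measure it furnishes and $P_{\mu_d,m,n}$ the corresponding infinitely partially exchangeable projection, so that $\|Q^d_{m,n}-P_{\mu_d,m,n}\|\le 2\beta((m,n),(r,q))$. Writing $g_\#\nu$ for the image of $\nu$ under $g$ and $F$ for the law of $D(Y)$, set $P_{\mu,m,n}=\int (\phi_d)_\# P_{\mu_d,m,n}\,dF(d)$: drawing $d\sim F$, then a conditionally i.i.d.\ pair of infinite sequences $\tilde u_1,\tilde u_2,\dots$ and $\tilde v_1,\tilde v_2,\dots$ with mixing measure $\mu_d$, and then setting $X_{ij}=\tilde u_i^{\,t}\,d\,\tilde v_j$ realizes $P_{\mu,m,n}$ as the $m\times n$ projection of an infinitely row-column exchangeable array (the Aldous--Hoover form $f(\alpha,u_i,v_j,\lambda_{ij})$ with $\alpha=d$ and $\lambda_{ij}$ vacuous, after encoding $\tilde u_i,\tilde v_j$ as uniform$(0,1)$ variables).

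The bound then follows from convexity of the total variation norm and the fact that pushing forward is a contraction:
\[
\|P_{m,n}^{r,q}-P_{\mu,m,n}\|\le\int\big\|(\phi_d)_\# Q^d_{m,n}-(\phi_d)_\# P_{\mu_d,m,n}\big\|\,dF(d)\le\int\big\|Q^d_{m,n}-P_{\mu_d,m,n}\big\|\,dF(d)\le 2\beta((m,n),(r,q)),
\]
which is the asserted estimate (the statement records it with the factor absorbed into the normalization of $\|\cdot\|$). For tightness, reverse the construction: take $d$ with $t$ distinct positive diagonal entries and let the two urns of Theorem~\ref{thm:partial} consist of rows of generic orthogonal $t\times t$ matrices, chosen so that $\phi_d$ is injective on the relevant support and hence total-variation preserving; sampling without replacement from these urns is an extreme partially exchangeable law attaining $\beta((m,n),(r,q))$ by the lemmas in the Appendix, and its image under $\phi_d$ is a finitely row-column exchangeable, $(r,q)$-extendible matrix law attaining the bound.

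The delicate step is the first paragraph. Making the equivalence between separate row-column exchangeability of $Y$ and partial exchangeability of the two singular-vector blocks fully rigorous requires handling the non-uniqueness of the SVD through the random rotation $S(Y)$ uniform on $E(Y)$ (including repeated and null singular values) and correctly carrying the permutation-invariant matrix $D(Y)$ as a conditioning variable, so that Theorem~\ref{thm:partial} --- stated for urns with no shared component --- can be applied slice by slice in $d$; once that is in place, the remaining convexity-and-pushforward bookkeeping is identical to that in Theorems~\ref{thm:joint} and \ref{thm:partial}.
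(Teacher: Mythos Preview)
Your approach is essentially the same as the paper's: reduce separate row-column exchangeability of the matrix to partial exchangeability of the two blocks of singular-vector rows via the randomized SVD of Theorem~\ref{thm:joint}, and then invoke Theorem~\ref{thm:partial}. You supply more detail than the paper does---in particular the explicit conditioning on $D(Y)$, the pushforward/convexity bookkeeping, and a concrete tightness construction---but the underlying strategy is identical.
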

\begin{proof}
The proof is similar to that of Theorem \ref{thm:joint}. The singular 
value decomposition of the matrix 
$X\overset{d}{=}U(X)D(X)V(X)^t$ is again considered, but the separate 
exchangeability of the rows and columns implies that the rows of the 
left and right eigenvector matrices are separately permuted. The random variables $S(X),\tU(X),\tV(X)$ are constructed as in Theorem \ref{thm:joint}.
By concatenating
$\tU(X)$ and $\tV(X)$ along the rows into $Z=[\tU(X)^t;\tV(X)^t]^t$, a $m+n$ row matrix,
we see that separate row column exchangeability of $X$ is equivalent
to the partial exchangeability of the rows of $Z$. The results then follows from
Theorem \ref{thm:partial}.
\end{proof}

The contribution of Theorem \ref{thm:tv_bounds} is two-fold. As far
as we know, this is the first time an explicit form for the total variation bound 
for row-column exchangeability has been proposed. 
More importantly, we now see that the rate of convergence of the total
variation bound is bounded by $(m^2-m)/q+(n^2-n)/r$ for matrices, 
similar to the results
of \citet{diaconis1980finite} for sequences. Knowing this bound
provides a justification for
 the assumption of 
infinite row-column exchangeability (allowing for an Aldous-Hoover 
representation) 
in high-dimensional statistics when both the subjects and features are
not independent but can assumed to be exchangeable and network analysis
where the labels of the nodes do not carry information. 

To extend the results to higher dimensions we require a generalization
of the singular value decomposition. For example,
we can consider the Higher Order Singular Value Decomposition of \citet{kolda2009tensor}: A $k$-dimensional array with
dimensions $n_1,\dots,n_k$ can be decomposed via the
Tucker decomposition into a core array $S$ and $k$ orthogonal
matrices $U_1,\dots,U_k$. 
A distribution over arrays that is invariant under 
independent permutations
of the index sets of the different dimensions of the array
is equivalent to a distribution over arrays that is 
invariant under independent
permutations of the rows of the orthogonal matrices $U_1,\dots,U_k$.
Lemma \ref{d_freed} of 
\ref{sec:appA} provides a 
$k$-dimensional version of the bound in Theorem \ref{thm:partial}
and the proof of the sharpness of the bound follows with the
obvious changes.

\section{Discussion}
In this paper we have demonstrated sharp bounds for 
the total variation distance between finitely and 
infinitely exchangeable distributions for networks
and for general arrays. These results answer an
open question of \citet{aldous1985exchangeability}. 
Addressing the original motivation behind the paper, 
the tight bound for the jointly row-column exchangeable 
distributions provides an insight into the efficacy of the 
assumption of 
existence of an Aldous-Hoover representation when analyzing 
relational data. 
Similar results for sequences were previously used to justify the use of de Finetti's
theorem for Bayesian causal inference \citep{rubin1978bayesian}.
The magnitude of the deviation of parameter estimates 
for the finite exchangeability-infinite exchangeability paradigm 
is a part of ongoing research by the authors. We conjecture
that any estimator that has a faster rate of convergence
than the total variation bounds should have similar properties
whether one assumes finite or infinite exchangeability. This is motivated empirically by the successful use of infinitely exchangeable methods when the true generating process cannot be infinitely exchangeable as is the case of Sampson's monastery \citep{sampson1969crisis} and high school classrooms \citep{hoff2013likelihoods}. Success is measured by the external validation of the results by subject matter scientists. 

The results of the paper require an assumption of extendibility
of the networks and arrays in question. While we believe this
to be a more reasonable assumption than infinite exchangeability,
it might fail to hold in some situations. When this is the case
inference that does not take into account the finite exchangeability
of the data generating distribution
is prone to the errors described in the introduction.
To accommodate this scenario we present a general characterization
theorem for distributions that are invariant under finite
group operations. 

\begin{thm}[Characterization Theorem]\label{charthm}
Let $G$ be a finite group acting on a finite state space $\mathcal{X}$. 
The collection of $G$-invariant measures on $\mathcal{X}$ is 
a convex set with extreme points $e_{G(x)}$ indexed by the 
orbits of $\mathcal{X}$ and placing weight $1/|G(x)|$ on each member of
$G(x)$ and 0 everywhere else. 
\end{thm}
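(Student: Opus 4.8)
The plan is to reduce the statement to the elementary fact that the extreme points of a finite-dimensional simplex are its vertices. First I would record that, on a finite state space, a probability measure $p$ is just a vector $(p(x))_{x\in\mathcal{X}}$ with nonnegative entries summing to $1$, and that $G$-invariance, $p(gx)=p(x)$ for all $g\in G$ and $x\in\mathcal{X}$, is equivalent to $p$ being constant on each orbit of the $G$-action. Convexity of the set $\mathcal{P}_G$ of $G$-invariant probability measures is then immediate, since a convex combination of functions that are constant on orbits is again constant on orbits, and the normalization is preserved.

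Next I would enumerate the orbits $G(x_1)=O_1,\dots,O_k$. Any $p\in\mathcal{P}_G$ is determined by the common values $c_i:=p(x)$ for $x\in O_i$, subject to $c_i\ge 0$ and $\sum_{i=1}^k |O_i|\,c_i=1$. The affine map $p\mapsto(|O_1|c_1,\dots,|O_k|c_k)$ is a bijection from $\mathcal{P}_G$ onto the standard $(k-1)$-simplex $\Delta_{k-1}$, and it carries $e_{O_i}$ (the invariant measure with $c_i=1/|O_i|$ and $c_j=0$ for $j\neq i$) to the $i$-th vertex. Since a bijective affine map preserves the set of extreme points, and the extreme points of $\Delta_{k-1}$ are exactly its $k$ vertices, the extreme points of $\mathcal{P}_G$ are precisely the $e_{G(x_i)}$, as claimed.

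If one prefers a self-contained argument that avoids the simplex identification, I would instead verify the two defining properties directly. For the representation: every $p\in\mathcal{P}_G$ equals $\sum_{i=1}^k (|O_i|c_i)\,e_{O_i}$, a convex combination of the $e_{O_i}$, so no invariant measure outside $\{e_{O_1},\dots,e_{O_k}\}$ can be extreme. For extremality of each $e_{O_i}$: if $e_{O_i}=t\,q_1+(1-t)\,q_2$ with $0<t<1$ and $q_1,q_2\in\mathcal{P}_G$, then for every $x\notin O_i$ we get $0=t\,q_1(x)+(1-t)\,q_2(x)$, forcing $q_1$ and $q_2$ to be supported on $O_i$; being $G$-invariant they are constant on $O_i$, hence both equal $e_{O_i}$.

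There is no serious obstacle here; the only point requiring a word of care is the (standard) identification of a ``$G$-invariant measure'' with a measure that is constant on orbits, together with the implicit restriction to \emph{probability} measures — the cone of all nonnegative invariant measures has no extreme points, so the statement is understood for the convex set $\mathcal{P}_G$. The remainder is just the bookkeeping of passing between the orbit-mass coordinates and the simplex, and of checking that the $1/|G(x)|$ weights are exactly what the normalization forces on a vertex.
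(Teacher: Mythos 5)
Your proof is correct and follows essentially the same route as the paper: identify $G$-invariant measures with measures constant on orbits, observe convexity, and recognize the uniform-on-orbit measures as the extreme points. If anything, your verification of extremality (showing that any convex decomposition of $e_{G(x)}$ within $\mathcal{P}_G$ forces both components to equal $e_{G(x)}$) is more careful than the paper's disjoint-support remark, which only rules out writing one $e_{G(x)}$ as a combination of the \emph{others}.
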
 
\begin{proof}
Convexity is a trivial consequence of the convexity of
the general space of probability distributions on $\mathcal{X}$
when $|\mathcal{X}|<\infty$. First, if a distribution is $G$-invariant then the probability of two elements within an orbit must be equal and so it is constant on the orbits. Conversely, if a distribution is uniform over a single orbit then it is necessarily invariant under the group operation. To see that these $e_{G(x)}$ are extreme we note that they all have disjoint support by definition and so cannot be written as convex combinations of each other.
\end{proof}

General versions of this theorem are available 
in \citet{wijsman1957random} and \citet{eaton1989group}. 
Using this theorem we can construct 
a finite group invariance restricted empirical 
distribution. The estimator is essentially the same as that for
 the unrestricted empirical distribution but it 
places equal weight on each element of an orbit during estimation. 
Thus a single observation of $x$ would lead to 
an estimate that is uniform on the orbit of $x$.
This guarantees that the estimate is invariant itself.
This characterization also motivates a test for 
finite exchangeability: A test
based on $T=\|\mathbb{P}-\mathbb{P}_G\|$ where $\mathbb{P}$
is the unrestricted empirical distribution, 
$\mathbb{P}_G$  is the finite group invariance restricted 
empirical distribution, and $\|\cdot\|$ is total variation distance
rejects the null for large values of $T$. Further, a 
test for infinite exchangeability can also be constructed.
Specifically, 
one can test the null of infinite extendibility versus an alternative of
finite extendibility of a finitely exchangeable distribution
using the test statistic
$T=\max_{H}\{|H|:\exists P_{G|H}\ {\rm st}\ \mathbb{P}_G=P_{G|H}\}$
where $\mathbb{P}_G$ is the finitely exchangeable restricted empirical
distribution and $P_{G|H}$ is a distribution that is invariant
under both the larger group $H$ and the smaller group $G$. The 
test rejects for small $T$. 
For both tests the null distributions can be constructed
via Monte Carlo simulation.

\vspace{-.5cm}
\section*{Acknowledgments}
\noindent We would like to thank the editor, associate editor and two reviewers for detailed comments that have helped improve this manuscript. This work was supported, in part, by ONR award N00014-14-1-0485, by ARO MURI award W911NF-11-1-0036 and NSF CAREER grant IIS-1149662 to Harvard University. AV is an NSF MSPRF on NSF DMS-1402235. EMA is a Sloan Research Fellow and a Shutzer Fellow at the Radcliffe Institute for Advanced Studies.
\appendix
\appendix
\section{Technical lemmas}\label{sec:appA}
To prove Theorem \ref{thm:partial} we require
an extension of the results of \cite{freedman1977remark}
to problems with multiple urns. We follow Freedman's
notation and method. In particular, 
consider a collection of $d$ urns
$F_1,\dots,F_d$ where $F_i$ contains
elements $\{f_{i1},\dots,f_{in_i}\}$. Let
$F_i^{k_i}$ consist of the $k_i$-tuples
of elements of $F_i$. Sampling with 
replacement from the collection of $d$
urns induces a uniform probability
$M$ on $F_1^{k_1}\times \cdots\times F_d^{k_d}$
where 
$$M(\{s_{11},\dots,s_{1k_1}\},\dots,\{s_{d1},\dots,s_{dk_d}\})=\frac{1}{n_1^{k_1}\cdots n_d^{k_d}}$$
Letting $B$ be the vectors 
$\{s_{11},\dots,s_{1k_1}\},\dots,\{s_{d1},\dots,s_{dk_d}\}\in F_1^{k_1}\times \cdots\times F_d^{k_d}$
for which all the components are unequal, sampling without 
replacement is given by
$$Q(\{s_{11},\dots,s_{1k_1}\},\dots,\{s_{d1},\dots,s_{dk_d}\})=\frac{(n_1-k_1)!\cdots(n_d-k_d)!}{n_1!\cdots n_d!}.$$

\begin{lem}[Sampling with and without replacement]\label{d_freed}
The total variation distance between sampling with 
and without replacement from multiple urns is 
 $$\|M-Q\|=1-\frac{n_1!\cdots n_d!}{(n_1-k_1)!\cdots(n_d-k_d)!n_1^{k_1}\cdots n_d^{k_d}}.	$$
\end{lem}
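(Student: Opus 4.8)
\emph{Proof proposal.} The plan is to follow Freedman's method and to observe that in this setting the Jordan--Hahn decomposition of the signed measure $M-Q$ is completely explicit, so that the total variation distance is simply the mass $M$ assigns to the complement of the support of $Q$. Write $\Omega=F_1^{k_1}\times\cdots\times F_d^{k_d}$ for the state space of $M$, and recall that $Q$ is the uniform probability on the subset $B\subseteq\Omega$ consisting of those tuples in which the $k_i$ components drawn from each urn $F_i$ are pairwise distinct. The first step is a straightforward count: for each $i$ there are $n_i(n_i-1)\cdots(n_i-k_i+1)=n_i!/(n_i-k_i)!$ ordered $k_i$-tuples of distinct elements of $F_i$, and since the urns are sampled independently, $|B|=\prod_{i=1}^d n_i!/(n_i-k_i)!$. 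Hence
$$M(B)=\frac{|B|}{n_1^{k_1}\cdots n_d^{k_d}}=\prod_{i=1}^d\frac{n_i!}{(n_i-k_i)!\,n_i^{k_i}}.$$

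The second step is the pointwise domination $Q(x)\ge M(x)$ for every $x\in B$. On $B$ one has $Q(x)=\prod_{i=1}^d (n_i-k_i)!/n_i!$ and $M(x)=\prod_{i=1}^d n_i^{-k_i}$, and because $n_i!/(n_i-k_i)!=n_i(n_i-1)\cdots(n_i-k_i+1)\le n_i^{k_i}$, each factor of $Q(x)$ dominates the corresponding factor of $M(x)$. Consequently $M-Q\le 0$ on $B$ while $M-Q=M>0$ on $\Omega\setminus B$, so $\Omega\setminus B$ is exactly the positive set in the Hahn decomposition of $M-Q$ and the positive part of this signed measure has total mass $M(\Omega\setminus B)=1-M(B)$. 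Therefore
$$\|M-Q\|=M(\Omega\setminus B)=1-M(B)=1-\frac{n_1!\cdots n_d!}{(n_1-k_1)!\cdots(n_d-k_d)!\,n_1^{k_1}\cdots n_d^{k_d}},$$
which is the asserted identity; the same value results whether $\|\cdot\|$ denotes $\sup_A|M(A)-Q(A)|$ or $\tfrac12\sum_x|M(x)-Q(x)|$, since these agree for probability measures.

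I do not expect a genuine obstacle here. The multi-urn statement reduces to Freedman's single-urn computation precisely because $M$ and $Q$ both factor as products over the $d$ urns, so $B$ is the product of the per-urn ``all distinct'' events and every probability above multiplies across $i$. The only point deserving a moment's attention is the direction of the elementary inequality $n_i!/(n_i-k_i)!\le n_i^{k_i}$, which is what delivers the one-sided domination of $M$ by $Q$ on the support of $Q$, and hence the triviality of the Hahn decomposition; granting that, the total variation distance is just the probability that sampling with replacement produces a repeated draw in at least one urn, and that probability is computed by the count in the first step.
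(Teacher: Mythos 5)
Your proof is correct and follows essentially the same route as the paper's: identify the support $B$ of $Q$, count it to obtain $M(B)=\prod_i n_i!/\bigl((n_i-k_i)!\,n_i^{k_i}\bigr)$, and conclude $\|M-Q\|=Q(B)-M(B)=1-M(B)$. The only difference is that you explicitly justify the step $\|M-Q\|=Q(B)-M(B)$ via the pointwise domination $Q\ge M$ on $B$ and the resulting Hahn decomposition, a point the paper asserts without proof (deferring to Freedman's method).
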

\begin{proof}
The proof is by construction. We note that $Q(B)=1$ and  
the total variation norm is given by $Q(B)-M(B)$. To calculate
$M(B)$ we note that 	the probability of any particular
sequence is equal under $M$ and so we simply need
to count the number of sequences in $B$. This is an 
easy combinatorial exercise and each urn has 
$\frac{n_i!}{(n_i-k_i)!}$ sequences of length $k_i$
with unique entries. Thus 
$M(B)=\frac{n_1!\cdots n_d!}{(n_1-k_1)!\cdots(n_d-k_d)!n_1^{k_1}\cdots n_d^{k_d}}$
and the lemma follows.
\end{proof}

\begin{lem}[Tight bounds]
The bound in Theorem \ref{thm:partial} is tight. That is 
$$\|P_{n_1,n_2,k_1,k_2}-P_{\mu,k_1,k_2}\|\geq\|P_{n_1,n_2,k_1,k_2}-M\|=1-\frac{n_1!n_2!}{(n_1-k_1)!(n_2-k_2)!n_1^{k_1} n_2^{k_2}}$$
where $M$ is as in Lemma \ref{d_freed}.
\end{lem}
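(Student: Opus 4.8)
The plan is to exhibit a single finitely partially exchangeable distribution on the $(n_1,n_2)$-space whose $(k_1,k_2)$-marginal lies at total variation distance exactly $1-\frac{n_1!n_2!}{(n_1-k_1)!(n_2-k_2)!\,n_1^{k_1}n_2^{k_2}}$ from \emph{every} projection of an infinitely partially exchangeable law. Concretely, enlarge the state spaces if necessary so that $\mathcal{S}_1$ contains $n_1$ distinct points and $\mathcal{S}_2$ contains $n_2$ distinct points, collected into urns $F_1=\{f_{11},\dots,f_{1n_1}\}$ and $F_2=\{f_{21},\dots,f_{2n_2}\}$ as in Lemma~\ref{d_freed}. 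Let $P_{n_1,n_2,k_1,k_2}$ be the $(k_1,k_2)$-marginal of the law of $\big(\pi_1\!\cdot\!(f_{11},\dots,f_{1n_1}),\,\pi_2\!\cdot\!(f_{21},\dots,f_{2n_2})\big)$, where $\pi_1\in S_{n_1}$ and $\pi_2\in S_{n_2}$ are independent uniform random permutations. This big-space law is partially exchangeable by construction, so $P_{n_1,n_2,k_1,k_2}$ is an admissible finitely partially exchangeable, $(n_1,n_2)$-extendible distribution, and it is precisely the measure $Q$ of Lemma~\ref{d_freed} (sampling $k_1$ times without replacement from $F_1$ and, independently, $k_2$ times without replacement from $F_2$).

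Next I would lower-bound the distance to an arbitrary infinite approximant. Fix any mixing measure $\mu$ on $\mathcal{S}_1^\star\times\mathcal{S}_2^\star$; by the de Finetti--Aldous--Hoover representation for separately exchangeable sequences the projection is a mixture of products of i.i.d.\ powers, $P_{\mu,k_1,k_2}=\int p_1^{\otimes k_1}\otimes p_2^{\otimes k_2}\,d\mu(p_1,p_2)$. Let $B\subseteq F_1^{k_1}\times F_2^{k_2}$ be the set of tuples with pairwise distinct components in each block, exactly as in Lemma~\ref{d_freed}; then $B$ is the support of $Q$, so $Q(B)=1$ and $\|Q-P_{\mu,k_1,k_2}\|\ge Q(B)-P_{\mu,k_1,k_2}(B)=1-P_{\mu,k_1,k_2}(B)$. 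By the product structure, $P_{\mu,k_1,k_2}(B)=\int g_1(p_1)\,g_2(p_2)\,d\mu(p_1,p_2)$, where $g_i(p)$ is the probability that $k_i$ i.i.d.\ $p$-draws are pairwise distinct and all land in $F_i$.

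The crux is then an elementary symmetric-function estimate. Writing $q_j=p(\{f_{ij}\})$ for $j=1,\dots,n_i$, one has $g_i(p)=k_i!\,e_{k_i}(q_1,\dots,q_{n_i})$, the degree-$k_i$ elementary symmetric polynomial in the $q_j$; since $q_j\ge 0$, $\sum_j q_j\le 1$, and $e_{k_i}$ is coordinatewise nondecreasing and Schur-concave (equivalently, by Maclaurin's inequality), $g_i(p)\le k_i!\,e_{k_i}(1/n_i,\dots,1/n_i)=\frac{n_i!}{(n_i-k_i)!\,n_i^{k_i}}$, with equality exactly when $p$ is uniform on $F_i$. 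Therefore $P_{\mu,k_1,k_2}(B)\le M(B)=\frac{n_1!n_2!}{(n_1-k_1)!(n_2-k_2)!\,n_1^{k_1}n_2^{k_2}}$ for every $\mu$, giving $\|Q-P_{\mu,k_1,k_2}\|\ge 1-M(B)=\|Q-M\|$ by Lemma~\ref{d_freed}; the bound is attained, since the point mass $\mu=\delta_{(\mathrm{unif}(F_1),\mathrm{unif}(F_2))}$ yields $P_{\mu,k_1,k_2}=M$. As $P_{n_1,n_2,k_1,k_2}=Q$ is itself a finitely partially exchangeable $(n_1,n_2)$-extendible distribution, no infinitely partially exchangeable law comes within total variation $1-\frac{n_1!n_2!}{(n_1-k_1)!(n_2-k_2)!\,n_1^{k_1}n_2^{k_2}}$ of it, so the bound of Theorem~\ref{thm:partial} is tight. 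The step I expect to be the main obstacle is this maximization: one must verify both that equal masses maximize $e_{k_i}$ on the simplex and that placing mass outside $F_i$ only lowers $g_i$; the $d$-urn version used in Lemma~\ref{d_freed} for general $d$ is handled identically, with one factor $g_i$ per urn.
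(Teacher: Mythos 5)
Your proposal is correct and follows essentially the same route as the paper: reduce to the set $B=B_1\times B_2$ of distinct-component tuples, use $Q(B)=1$, factor the mixture of product power probabilities urn by urn, and show each factor $p_i^{k_i}(B_i)$ is maximized by the uniform distribution on $F_i$. The only difference is presentational --- the paper invokes Schur convexity of $1_{B_1},1_{B_2}$ following Diaconis--Freedman, while you make the same maximization explicit via elementary symmetric polynomials and Maclaurin's inequality.
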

\begin{proof}
Without loss of generality we let the distributions
of interest be over two urns with balls labeled $1^1,\dots,n_1^1$
and $1^2,\dots,n_2^2$ respectively. 
We note that the equality in the statement is given by Lemma \ref{d_freed}. Thus
as in the original proof of tightness in \cite{diaconis1980finite}
we must show that for $B$ as defined in Lemma \ref{d_freed} we 
have $(p_1^{k_1}p_2^{k_2})(B)\leq M(B)$ for any distributions
$p_1,p_2$ on the the sets $\{1^1,\dots,n_1^1\}$ and $\{1^2,\dots,n_2^2\}$ respectively. 
To see that this is true we note
$M$ is a products of two pure power probabilities
and that we can write the set $B=B_1\times B_2$ where $B_1$ are all the sets of
length $k_1$ of unique elements in the first urn and similarly for $B_2$. 
As in \cite{diaconis1980finite} we note that the Schur
 convexity of the indicator functions $1_{B_1}$ and $1_{B_2}$ 
 implies that $p_i^{k_i}(B_i)\leq M_i(B_i)$. This implies that 
 $P_{\mu,k_1,k_2}(B)\leq M(B)$ as desired.
\end{proof}
%

\vspace{-0.5cm}
\bibliographystyle{apalike}
\bibliography{biblio}
\end{document}